\documentclass{amsart}
\newtheorem{theorem}{Theorem}[section]
\newtheorem{lemma}[theorem]{Lemma}
\newtheorem{corollary}[theorem]{Corollary}
\newtheorem{proposition}[theorem]{Proposition}

\theoremstyle{definition}
\newtheorem{definition}[theorem]{Definition}

\newtheorem{assumption}[theorem]{Assumption}

\theoremstyle{remark}
\newtheorem{remark}[theorem]{Remark}
\numberwithin{equation}{section}


\parindent = 0pt
\usepackage{color,graphicx,amssymb}
\usepackage{graphicx}
\allowdisplaybreaks
\begin{document}
\title{Decay of correlation for random intermittent maps}
\author{Wael Bahsoun$^{\dagger}$}
\address{Department of Mathematical Sciences, Loughborough University,
Loughborough, Leicestershire, LE11 3TU, UK}
\email{$\dagger$ W.Bahsoun@lboro.ac.uk, $\ddagger$ Y.Duan@lboro.ac.uk}
\author{Christopher Bose$^{*}$}
\address{Department of Mathematics and Statistics, University of Victoria,
   PO BOX 3045 STN CSC, Victoria, B.C., V8W 3R4, Canada}
\email{$*$ cbose@uvic.ca}
\author{Yuejiao Duan$^{\ddagger}$}
\subjclass{Primary 37A05, 37E05}
\date{\today}
\keywords{Interval maps with a neutral fixed point, intermittency, random dynamical systems, decay of correlations.}
\begin{abstract}
We study a class of random transformations built over finitely many intermittent maps sharing a {\em common} indifferent fixed point. Using a Young-tower technique, we show that the map with the fastest relaxation rate dominates the asymptotics. In particular, we prove that the rate of correlation decay for the annealed dynamics of the random map is the same as the {\em sharp rate} of correlation decay for the map with the fastest relaxation rate.
\end{abstract}
\maketitle
\pagestyle{myheadings}
\markboth{Decay of correlation for random intermittent maps}{W. Bahsoun, C. Bose, Y. Duan}

\section{introduction}
General statistical properties of deterministic expanding maps of the interval with a neutral fixed point are by now well understood. In \cite{Pi} Pianigiani proved existence of invariant densities of such maps. In \cite{H,LSV,Y} it was independently proved that such maps exhibit a polynomial rate of correlation decay.  Later Gou\"ezel \cite{G1} showed the rate obtained in \cite{Y} is in fact sharp. The slow mixing behaviour of such maps made them a useful testing ground for physical problems with intermittent behaviour: systems whose orbits spend very long time in a certain small part of the phase space.

\bigskip

In this paper we are interested in studying i.i.d.\ randomized compositions of two intermittent maps sharing a common indifferent fixed point. It is intuitively clear that the annealed\footnote{\emph{Annealed
dynamics} refers to the randomized dynamics, averaged over the randomizing space, see Subsection \ref{rep} and Theorem \ref{main}.  This should be contrasted with the notion of \emph{quenched dynamics}, the behaviour of the system with one random choice of the randomizing sequence.  The term \emph{almost sure dynamics} is also used to refer to quenched dynamics.} dynamics of the random process will also have a polynomial rate of correlation decay. However, we are interested in the following question: How do the asymptotics of the random map relate to those of the original maps; in particular, the rate of correlation decay? 

\bigskip

We show that the map with the fast relaxation rate dominates the asymptotics (see Theorem \ref{main} for a precise statement). Interestingly, in our setting, the map with slow relaxation rate is allowed to be of `boundary-type', and consequently admit an infinite ($\sigma$-finite) invariant measure, but the random system will always admit an absolutely continuous invariant {\em probability} measure. We obtain our result by using a version of the skew product representation\footnote{The skew product studied in this paper is also related to a skew product studied by Gou\"ezel in \cite{G2}. See Remark \ref{Gouz-comp} for more details.} studied in \cite{BBQ} and a Young-tower technique \cite{Y}.  
 
 \bigskip
 
In Section 2 we introduce our random system and its skew product representation. The statement of our main result Theorem \ref{main} is also in Section 2. In Section 3 we build a Young-tower for the skew product representation. Proofs, including the proof of Theorem \ref{main}, are in Section 4.
\section{Setup and Statement of the main result}
\subsection{A random dynamical system}
Let $(I,\mathfrak B(I),m)$ be the measure space, with $I=[0,1], \mathfrak{B}(I)$ the Borel $\sigma-$algebra and $m$ being Lebesgue measure. By a \emph{Liverani-Saussol-Vaienti \textnormal{(LSV)}-map} we mean a member of the parameterized family of maps on $I$ given by
\begin{equation}\label{eqn_lsv}
T_{\alpha}(x)=\begin{cases}
       x(1+2^{\alpha}x^{\alpha}) \quad x\in[0,\frac{1}{2}]\\
       2x-1 \quad \quad \quad x\in(\frac{1}{2},1]
       \end{cases}.
\end{equation}
Here the parameter $\alpha\in (0,\infty)$.  Each LSV map has a neutral fixed point at $x=0$.  For  $0<\alpha < 1$,  $T_\alpha$ admits a finite, absolutely continuous invariant measure while for $\alpha \geq 1$ the absolutely continuous invariant measure is $\sigma-$ finite. See \cite{Pi} and \cite{T} for the some of the earliest results of this type.

Let $0<\alpha_1 < \alpha_2 < \dots < \alpha_r \leq 1$. We consider a random map  $T$ which is given by:
\begin{equation}\label{eqn_randommap}
T(x):=\{T_{\alpha_1}(x),T_{\alpha_2}(x), \dots T_{\alpha_r}(x); p_{1}, p_{2}, \dots p_r \},
\end{equation} 
where 
$p_i>0$ and $\sum p_i = 1$.
Note that all the individual maps share a single common neutral fixed point at $x=0$.

\begin{assumption}
Since nothing we will do in the sequel depends on $r$, the number of maps making up the random map, we will restrict the discussion to the case $r=2$ and denote the parameters 
$$0<\alpha<\beta\leq 1.$$
At the same time, this will simplify our notation:
\begin{equation}\label{eqn_randommap2}
T(x):=\{T_\alpha(x), T_\beta(x); p_1, p_2\}.
\end{equation}
\end{assumption}

The random map $T$  in (\ref{eqn_randommap2}) maybe viewed as a Markov process with transition function 
$$\mathbb P(x,A)=p_{1}{\bf 1}_{A}(T_{\alpha}(x))+p_{2}{\bf 1}_{A}(T_{\beta}(x))$$ 
of a point $x\in I$ into a set $A\in \mathfrak B(I)$. The transition function induces an operator, $E_T$, acting on measures; i.e., if $\mu$ is a measure on $(I, \mathfrak B)$,
$$(E_T\mu)(A)=p_1\mu(T_{\alpha}^{-1}(A)) + p_2\mu(T_{\beta}^{-1}(A)).$$
A measure $\mu$ is said to be $T$-invariant if $$\mu=E_T\mu,$$
and $\mu$ is said to be an absolutely continuous invariant measure if $d\mu=f^*dm$, $\int_I f^*dm=1$.
To study absolutely continuous invariant measures, we introduce the transfer operator (Perron-Frobenius) of
the random map $T$:
$$(P_{T}f)(x)=p_1P_{T_{\alpha}}\left(f\right)(x)+p_2P_{T_{\beta}}\left(f\right)(x),$$
where $P_{T_{\alpha}}, P_{T_{\beta}}$ denote the transfer operators associated with the $T_{\alpha}, T_{\beta}$ respectively. Then it is a straight-forward computation to show that a measure $\mu=f^*\cdot m$ is absolutely continuous invariant measure if
$$P_Tf^*=f^*.$$
\subsection{Skew product representation}\label{rep}
By the {\em annealed} dynamics of the random map we mean the statistics of the random dynamical system averaged over the randomizing space (see \cite{ALS} for a general treatment of annealed versus quenched interpretation). Probabilistic aspects of $T$, in particular the correlation decay of the annealed dynamics, are frequently studied by constructing a Young-tower for the skew-product representation of $T$. For this purpose, we are going to use a version\footnote{The results obtained in \cite{BBQ} are valid for any class of measurable non-singular maps on $\mathbb R^q$, without any regularity assumptions. Moreover in \cite{BBQ}, the probability distribution on the noise space is allowed to be place-dependent.} of the skew product representation which was studied in \cite{BBQ}. Define the skew product transformation $S(x,\omega):I\times I\to I\times I$ by 
\begin{equation}\label{skew}
S(x,\omega)=(T_{\alpha(\omega)},\varphi(\omega)),
\end{equation}
where
\begin{equation}\label{alpha}
\alpha(\omega)=\begin{cases}
       \alpha \quad ,\omega\in[0,p_1)\\
       \beta \quad ,\omega\in[p_1,1]
       \end{cases}
;\quad\quad
\varphi(\omega)=\begin{cases}
       \frac{\omega}{p_1} \quad \quad ,\omega\in[0,p_1)\\
       \frac{\omega-p_1}{p_2} \quad ,\omega\in[p_1,1]
       \end{cases}.
\end{equation}       
We denote the transfer operator associated with $S$ by $\mathcal L_S$: for $g\in L^1(I\times I)$ and measurable $A \subseteq I \times I$,
$$
  \int_{S^{-1}A} \, g\, d(m \times m)(x,\omega) = \int_A \, \mathcal L_S g \,
  d(m \times m)(x,\omega).
$$
Then a measure $\nu$, such that $d\nu=g^*d(m\times m)$ and $\int_{I\times I}g^*d(m\times m)=1$, is an absolutely continuous $S$-invariant measure if 
$$\mathcal L_S g^*=g^*.$$
In \cite{BBQ}, Theorem 5.2 it is shown that if  $g \in L^1(I\times I)$ and $\mathcal L_S g =\lambda g$ with $|\lambda|=1$, then 
$$g(x,\omega)=f(x)\cdot {\bf 1}(\omega)$$
and $P_T f = \lambda f$, 
 that is, $g$ depends only on the spatial coordinate $x$ and 
 as a function of $x$ only, is also an eigenfunction for $P_T$. Setting $\lambda =1$ we obtain
$\mathcal L_S g^*=g^*$ if and only if  $g^*(x, \omega) = f^*(x)$ with  $P_T f^* = f^*$.  Consequently there is a one to one correspondence between invariant densities for $S$ and invariant densities for $T$.   Moreover, dynamical properties such as ergodicity, number of ergodic components or weak-mixing, properties that are determined by peripheral eigenfunctions,  can be determined via either system. 

On the other hand, properties like correlation decay (or even strong mixing) cannot be established by peripheral spectrum alone. 

\begin{definition}\label{def_correlation} 
Suppose $\tau: X \rightarrow X$ preserves the measure $\mu$ on $X$. 
For $f\in L^\infty(X,\mu)$ and $g \in L^1(X, \mu)$ denote by 
$$Cor_n(f,g)= Cor_{n, \tau}(f,g) := \int_X  f \circ \tau^n \cdot g \, d\mu 
-\int_X f\, d\mu \,  \int_X g \, d\mu.$$
\end{definition}

Normally we will simply write $Cor_n(f,g)$ when the map being applied is understood. 

Estimates on correlation decay are known in many dynamical settings. 
For example, it was shown in \cite{Y} that for $f  \in L^\infty$, 
$g$ H\"older continuous on $I$ and $\tau=T_\alpha,~0<\alpha<1$ an LSV-map,   $|Cor_{n, T_\alpha} (f,g)| 
= O(n^{1 - \frac{1}{\alpha}})$.  Gou\"ezel in \cite{G1} proved that this rate is sharp.

Our main result, Theorem \ref{main}, establishes exactly the same rate of correlation decay for the random map. 


\subsection{Statement of the main result}
\begin{theorem}\label{main} Let $0<\alpha<\beta\le 1$ and $S$ be as defined in Subsection \ref{rep}. Then
\begin{enumerate}
\item $S$ admits a unique absolutely continuous invariant probability measure $\nu$;
\item $(S, \nu)$ is mixing;
 \item for $\phi\in L^{\infty}(I\times I, m\times m)$ and $\psi$ a H\"older continuous function on $I\times I$  we have 
 $$|Cor_{n,S}(\phi, \psi)|=
 \mathcal{O}(n^{1-\frac{1}{\alpha}});$$
 
\end{enumerate}
\end{theorem}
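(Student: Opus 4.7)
The plan is to build a Young tower for the skew product $S$ with base $\Delta_0=(1/2,1]\times[0,1]$, estimate the tail of the first return time $R(x,\omega)=\inf\{n\geq 1: S^n(x,\omega)\in\Delta_0\}$, and then apply Young's theorem \cite{Y} to deduce all three conclusions simultaneously.

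For the tower construction, observe that (\ref{eqn_lsv}) gives $T_\alpha(x)=T_\beta(x)=2x-1$ on $(1/2,1]$, so the first step out of $\Delta_0$ is deterministic in the spatial coordinate. The natural partition of $\Delta_0$ therefore indexes by (return time, realized random word): for each $k\geq 1$ and each word $w\in\{\alpha,\beta\}^{k-1}$ set $A_{k,w}=\{(x,\omega): 2x-1\in J_{k,w},\ \omega\in W(w)\}$, where $W(w)\subset[0,1]$ is the $\varphi$-cylinder producing the symbols $w$ and $J_{k,w}\subset(0,1/2]$ is the maximal interval on which $T_{w_{k-1}}\circ\cdots\circ T_{w_1}$ is the first iterate to land in $(1/2,1]$. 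On each $A_{k,w}$, $S^k$ is a full Markov branch onto $\Delta_0$ with bounded distortion (standard LSV distortion bounds apply uniformly in the parameter, and $\varphi$ is piecewise linear), verifying the Young tower axioms for $S$.

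The central step is the tail estimate $(m\times m)\{R>n\}=\mathcal{O}(n^{-1/\alpha})$. Since on $(0,1/2]$ both maps are monotone increasing with $T_\alpha(y)\geq T_\beta(y)>y$, a one-line induction gives
\begin{equation*}
y_n \geq T_\alpha^{K_{n-1}(\omega)}(y_1),
\end{equation*}
where $y_1=2x-1,\ y_k$ is the random orbit, and $K_{n-1}(\omega)$ counts the $\alpha$-choices among the first $n-1$ noise symbols. Let $\tau_\alpha(y)\asymp y^{-\alpha}$ be the deterministic $T_\alpha$-exit time from $(0,1/2]$. Then $\{R>n\}\subseteq\{K_{n-1}\leq\tau_\alpha(y_1)\}$. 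Split according to whether $y_1\leq c\,n^{-1/\alpha}$ or not: the first set has Lebesgue measure $\mathcal{O}(n^{-1/\alpha})$, contributing $\mathcal{O}(n^{-1/\alpha})$ after bounding the conditional probability by $1$; on the complement one has $\tau_\alpha(y_1)<p_1 n/2$, and since $\varphi$ is a Bernoulli shift and $K_{n-1}$ is Binomial$(n-1,p_1)$ under Lebesgue, Hoeffding's inequality gives $m\{K_{n-1}\leq p_1 n/2\}\leq e^{-c'n}$, an exponentially small contribution. Combining yields the claimed polynomial tail.

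Given this tail bound, Young's theorem \cite{Y} delivers all three conclusions: (1) existence and uniqueness of an ACIP $\nu$ for $S$; (2) mixing, since $R=1$ on the positive-measure set $(3/4,1]\times[0,1]$, giving aperiodicity of return times; and (3) correlation decay at rate $\mathcal{O}(n^{1-1/\alpha})$ for H\"older versus $L^\infty$ observables. The main obstacle is the tail estimate, where one must quantify how the positive probability $p_1>0$ of selecting the fast map at each step beats the worst-case trapping near $x=0$ by the slow map $T_\beta$. The sandwich inequality $y_n\geq T_\alpha^{K_{n-1}}(y_1)$ combined with Hoeffding's bound is what converts the mere positivity of $p_1$ into the sharp rate $n^{-1/\alpha}$ inherited from $T_\alpha$.
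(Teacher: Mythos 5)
Your plan follows the paper's proof quite closely in structure and in all the essential ideas: same Young tower with base $\Delta_0=(1/2,1]\times[0,1)$ partitioned by return time and realized random word; same comparison of the random orbit to the deterministic $T_\alpha$-dynamics restricted to the $\alpha$-steps (your sandwich $y_n\geq T_\alpha^{K_{n-1}(\omega)}(y_1)$ plays exactly the role of the paper's Lemma~\ref{lem_K_0}, and rests on the same observation that $T_\beta\geq \textnormal{id}$ on $(0,1/2]$); same use of Hoeffding's inequality to make the large-deviation event exponentially small; same invocation of Young's theorem to deliver all three conclusions once the tail estimate and aperiodicity are in hand. The only stylistic difference is that you track the forward random orbit and use the exit time $\tau_\alpha(y_1)$, whereas the paper tracks the backward preimages $x_n(\omega)$ and estimates $E_\omega(x_n(\omega))\sim n^{-1/\alpha}$; since $(m\times m)\{R>n\}=\tfrac12 E_\omega(x_n(\omega))$, these are the same estimate in different coordinates.

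The one substantive gap is the distortion estimate, which you dismiss as ``standard LSV distortion bounds apply uniformly in the parameter.'' Bounded distortion for a long random composition of \emph{different} LSV maps is not a citation to the single-map case: the paper devotes Lemma~\ref{distlog} and Proposition~\ref{distortion} to it, using the Koebe principle together with the negative Schwarzian of the composed left branches, and this is precisely the one place where the hypothesis $\beta\leq 1$ enters (for $\beta>1$ the left branch of $T_\beta$ has positive Schwarzian near the fixed point, see Remark~\ref{positive_schwarzian}). As written, your proposal never uses $\beta\leq 1$, which is a signal that something is being glossed over; you would need either the paper's Koebe argument or an explicit chain-rule estimate controlling $\prod T_{\alpha(\varphi^k\omega)}''/T_{\alpha(\varphi^k\omega)}'$ along a random orbit. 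The probabilistic core of your argument is correct, but the verification of Young's condition (B) needs to be filled in, and it is not a routine step.
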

\begin{remark}
Our main goal in Theorem \ref{main} is not so much to show that $S$ has polynomial rate of correlation decay, but to discover how the correlation decay for $S$ relates to those of the original maps. Indeed, if $0<\alpha<\beta<1$, and without any further conditions on $\alpha$ and $\beta$, one can easily obtain, by just using the rough estimates contained in Lemma \ref{lem_rough_estimates} and the Young tower construction detailed in the next two sections, an upper bound on the rate of order $\mathcal{O}(n^{1-\frac{1}{\beta}})$; that is, the rate of decay is at least as fast as the slowest escape rate map.   What we have shown in Theorem \ref{main} is that the actual decay rate of the random map is \emph{completely determined} by the faster escape rate of the map $T_\alpha$.
\end{remark}
\begin{remark}
It is worth noting that in Theorem \ref{main}, $\beta\le 1$. The case when $\beta=1$ is interesting on its own since in this case the map $T_{\beta}$ admits only an infinite ($\sigma$-finite) absolutely continuous invariant measure, but Theorem \ref{main} shows that the skew product $S$, and hence the random map $T$, admits a unique absolutely continuous invariant probability measure.  
\end{remark}
\begin{remark}\label{Gouz-comp} Limit theorems for the following related skew product were studied by Gou\"ezel in \cite{G2}: 
$$S(x,\omega)=(T_{\alpha(\omega)},4\omega),$$
with $\omega \in \mathbb S ^1$ and
$T_\alpha(\omega)$ being a random choice of LSV-map from Equation 
\ref{eqn_lsv}. 
For the randomizing process, it is further assumed that 
\begin{enumerate}
\item $\alpha(\omega)$ is $C^2$;
\item $0<\alpha_{\text{min}}<\alpha_{\text{max}}<1$;
\item $\alpha(\omega)$ takes the value $\alpha_{\text{min}}$ at a unique point $\omega_0\in\mathbb S^1$, with $\alpha''(\omega_0)>0$;
\item $\alpha_{\text{max}}<\frac32\alpha_{\text{min}}$.
\end{enumerate} 
Under the above conditions, using a result of P\`ene \cite{P} (see \cite{G2} Theorem B.1), Gou\"ezel (\cite{G2}, Theorem 4.1) obtained asymptotics that would lead to a 
correlation decay rate of order $\mathcal{O}(\sqrt{\log n}\cdot n^{1-\frac{1}{\alpha_{\text{min}}}})$.

Gou\"ezel remarks in \cite{G2} that in his setting the conditions 
$\alpha_{\text{max}} < 1$ and $\alpha_{\text{max}} < \frac{3}{2} \alpha_{\text{min}}$ may be technical artifacts, arising from the method of proof.   
\end{remark}
\begin{remark}\label{new:re}
In our i.i.d.\ setting, we rely on relatively simple (and classical) estimates on large deviations for i.i.d.\ randomizers and show, for all $0<\alpha < \beta \leq 1$, that the correlation decay rate of the annealed dynamics is exactly the same as the correlation decay rate of the fastest mixing map $T_\alpha$. Note that the constraint $\beta \leq 1$ still may be a technical artifact, a consequence of our use of the negative Schwarzian derivative and Koebe Principle in Lemma \ref{distlog}, rather than some underlaying obstruction.  Indeed, the probabilistic analysis of our model (see Proposition \ref{prop_expectation_estimates}) is valid for the full range of parameter values $0<\alpha < \beta < \infty$, so it may be possible to extend our results to the case of $0< \alpha < 1 < \beta$ for the (presumably) finite invariant measure case or even $1\leq \alpha < \beta < \infty$, by following the techniques of \cite{MT}, where one expects the invariant measure to be only $\sigma$-finite. 
\end{remark}

\section{A Young-tower for $S$}
\subsection{Notation}
Set
$$T^n_\omega(x):=T_{\alpha(\varphi^{n-1}\omega)}\circ...\circ T_{\alpha(\varphi\omega)}\circ T_{\alpha(\omega)}(x).$$
Then
$$S^n(x,\omega)=(T^n_\omega(x),\varphi^{n}(\omega)).$$
Also, set
$$P^n_\omega:=p_{\alpha(\varphi^{n-1}\omega)}\times...\times p_{\alpha(\varphi\omega)}\times p_{\alpha(\omega)},$$
where $ p_{\alpha(\omega)}=p_1,$ for $\alpha(\omega)=\alpha$ and $ p_{\alpha(\omega)}=p_2,$ for $\alpha(\omega)=\beta.$ 
We define two sequences of random points $\{x_n(\omega)\}$ and $\{x'_n(\omega)\}$ in $[0,1]$ which will be useful in the construction of a suitable Young tower. The points $x_n(\omega)$ lie in $(0, 1/2]$. Set
\begin{equation}\label{def_backorbit}
x_1(\omega)\equiv\frac{1}{2}\text{ and } x_n(\omega)=T^{-1}_{\alpha(\omega)}\mid_{[0,\frac{1}{2}]}[x_{n-1}(\varphi\omega)], n\geq 2.
\end{equation}
Observe that with this notation, 
$$S(x_n(\omega),\omega) 
= (T_{\alpha(\omega)}(x_n(\omega)), \varphi \omega) = (x_{n-1}(\varphi \omega), \varphi \omega). $$
The points $\{x'_n(\omega)\}$  lie in $(\frac{1}{2},1]$, defined by
\begin{equation}\label{def_backorbit_prime}
x'_0(\omega)\equiv 1,x'_1(\omega)\equiv\frac{3}{4} \text{ and } x'_n(\omega)=\frac{x_n(\varphi \omega)+1}{2},n\geq 2,
\end{equation}
that is, $\{x'_n(\omega)\}$ are preimages of $\{x_n(\varphi \omega)\}$ in  $(\frac{1}{2},1]$ under the right branch $2x-1.$
\begin{figure}[b]  
   \centering
   \includegraphics[width=3.5 in]{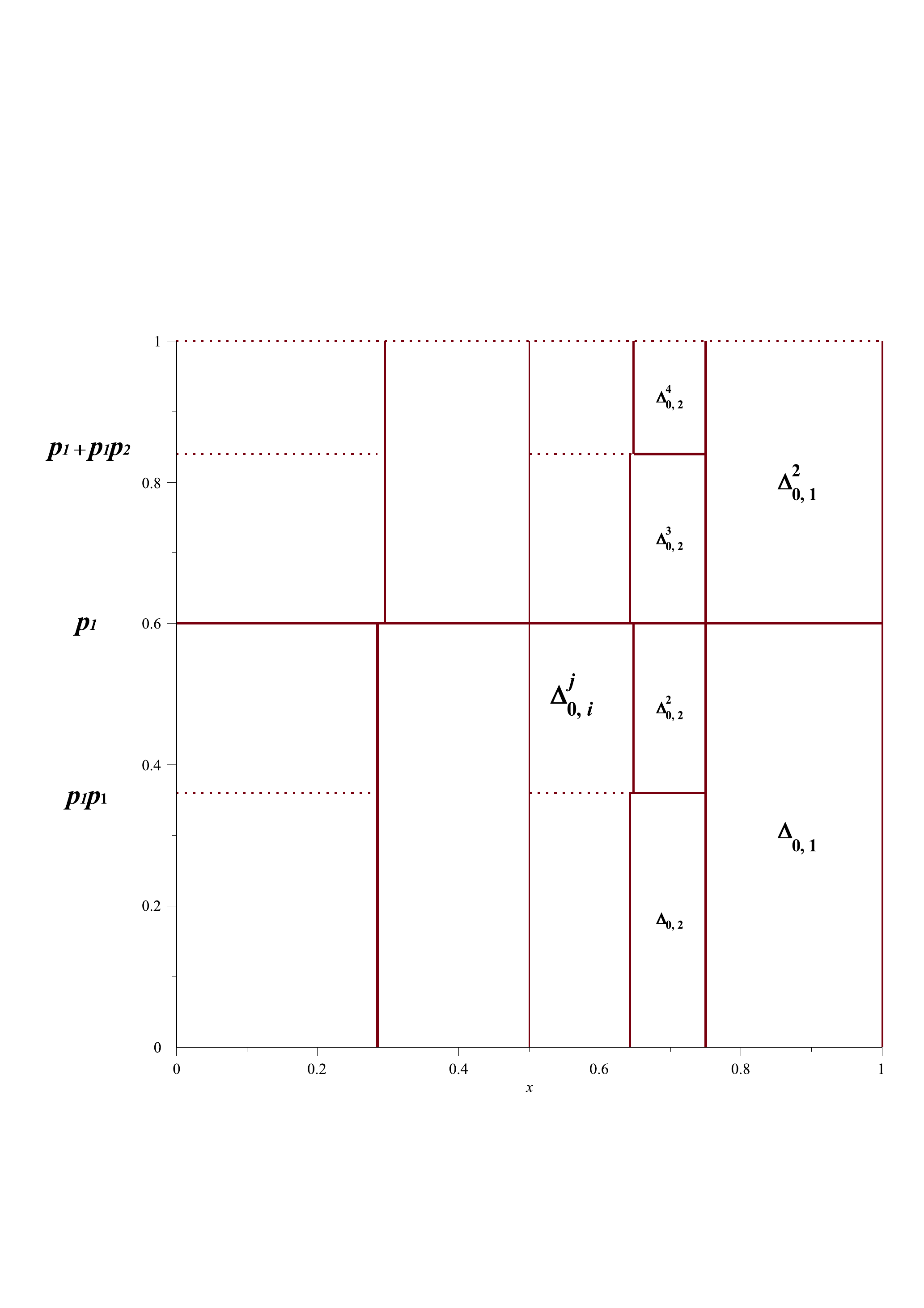}
   \caption{An example of the base of the tower when $\alpha=0.5$ $\beta=0.7$, $p_1=0.6$.}
   \label{fig:base_partition}
\end{figure} 
\subsection{A tower for $S$}
Let $\Delta_0=(\frac{1}{2},1]\times[0,1)$. Let $R: \Delta_0\to \mathbb{Z}^+$ be the first return time function and $S^R : \Delta_0\to \Delta_0$ be the return map. $\Delta_0$ is referred to as the base of the tower $\Delta$ which is given by
$$\Delta :=\{(z,n) :  z\in \Delta_0 \text{ and }  n=0,1,...,R(z)-1 \}.$$
Let $F:\Delta \to \Delta $ be the map acting on the tower as follows:
$$F(z,l)=\begin{cases}
       (z, l+1),  & \mbox{ if } l < R(z)-1,\\
       (S^R(z),0) ,& \mbox{ if } l = R(z)-1
       \end{cases}$$
We refer to $\Delta_{l}:= \Delta\cap\{n=l\}$ as the $l^\text{th}$ level of the tower. For $n\geq 1$, set $I_n(\omega):=(x_{n+1}(\omega),x_{n}(\omega)]$ and $J_n(\omega):=(x'_{n}(\omega),x'_{n-1}(\omega)]$.
Observe that every point in $J_n(\omega)$ will return to $(\frac{1}{2},1]$ in $n$ steps 
under the random iteration $T^n_{\omega}$ as follows:
$$J_n(\omega)\rightarrow I_{n-1}(\varphi \omega)\rightarrow I_{n-2}(\varphi^2 \omega)\rightarrow...\rightarrow I_{1}(\varphi^{n-1}\omega)\rightarrow(\frac{1}{2},1].$$
Next we partition $\Delta_0$, into subsets $\Delta_{0,i}$, $i=1,2,\dots$ where
$$\Delta_{0,i}:= \{ (x,\omega) ~|~ x \in J_i(\omega) \}$$
and then further partition each $\Delta_{0,i}$ into subsets $\Delta^j_{0,i}, \, j=1, 2, \dots 2^i$ according to the $2^i$ possible
values  of the
string $\alpha(\omega), \alpha(\varphi \omega),  \dots \alpha(\varphi^{i-1}\omega)$.
Defined this way,  $S^i$ maps each subset $\Delta^j_{0,i}$ bijectively to $\Delta_0$. 

For example, in the case $i=2,$ there are four sets $\Delta^j_{0,2}$ on which 
$R=2$ and such that $S^R$ maps each set bijectively to $\Delta_0$:
$$\Delta^j_{0,2}=\begin{cases}
       J_2(\omega)\times [0,p^2_1)     & \mbox{ , if } j=1,\\
       J_2(\omega)\times [p^2_1,p_1)   & \mbox{ , if } j=2,\\
       J_2(\omega)\times [p_1,p_1+p_1\cdot p_2) & \mbox{ , if } j=3,\\
       J_2(\omega)\times [p_1+p_1\cdot p_2,1) & \mbox{ , if } j=4.\\
       \end{cases}$$
To summarize, 
$$\Delta_{0,i}=\bigcup\limits_{j=1}\limits^{2^i}\Delta^j_{0,i}.$$
and for the base,
$$\Delta_0 = \bigcup\limits_{i=1}\limits^{\infty}\bigcup\limits_{j=1}\limits^{2^i}\Delta^j_{0,i}$$
where, for every $i$ and  $j=1,2,...,2^i,$
$$R\mid_{\Delta^j_{0,i}}=i.$$
Finally the tower  $\Delta$ is partitioned by
$$\Delta=\bigcup\limits_{i=1}\limits^{\infty}\bigcup\limits_{l=0}\limits^{i-1}(\bigcup\limits_{j=1}\limits^{2^i}\Delta^j_{l,i}).$$
An example of the partition of the base of the tower and a few  images  of 
the partition elements under $S$  is presented in Figure \ref{fig:base_partition}.

\subsection{Using Young's technique to prove Theorem \ref{main}}
We say $s(z_1,z_2)$ is a \textit{separation time} for $z_1,z_2\in \Delta_0$ if $s$ is the smallest $n\geq 0$ such that $(F^R)^n(z_1)$ and  $(F^R)^n(z_2)$ lie in distinct $\Delta_{0,i}^j$. Also let $\hat R:\Delta\to\mathbb Z$ be the function defined by
$$\hat R(x,\omega)=\text{ the smallest integer } n\ge 0 \text{ s.t. } F^n(x,\omega)\in\Delta_0.$$
To prove Theorem \ref{main}, we have to:
\begin{itemize}\item[(A)] Prove that $\int_{I\times I}Rd(m\times m)<\infty$, and establish the asymptotic estimate $(m\times m)\{\hat R>n\}=\mathcal{O}(n^{1-\frac1\alpha})$,
\item[(B)] Establish the bounded distortion conditions on the return map: there exists $0<\theta<1$ and $C(F)>0$ such that
\begin{eqnarray*}\label{Jacob}
\left|\frac{DF^R(z_1)}{DF^R(z_2)}-1\right|\leq C(F)\cdot\theta^{s(F^R(z_1),F^R(z_2))}, \forall i=1,2,...,\forall j=1,...,2^i,\forall z_1,z_2\in \Delta^j_{0,i},
\end{eqnarray*}
\item[(C)] Confirm that the return times are aperiodic. 
\end{itemize}
(A) is established by Proposition \ref{prop_expectation_estimates} in Subsection \ref{A} while (B) is the content of  
Proposition \ref{distortion} in Subsection \ref{B}.  Since 
we have all possible integer return times, (C) is immediate. 
It is interesting to note
that the upper bound constraint $\beta \leq 1$ specified in our main result,  Theorem \ref{main}, is only 
used in Proposition \ref{distortion}, so the tower asymptotics 
detailed in (A) hold for all
pairs $0<\alpha < \beta< \infty$.


\section{Proofs}

\subsection{Estimates on the return sets}\label{A}

Throughout this section we will adopt the notation
$E_\omega( \cdot ) = \int_I \cdot(\omega) d\omega$
for expectation with respect to the randomizing variable. 
Also, we write $a_n \sim b_n$  if there is a constant $C>1$ such 
that $C^{-1}b_n \leq  a_n \leq Cb_n$  for all $n$. 

\newpage

\begin{proposition}\label{prop_expectation_estimates}
For all $0< \alpha < \beta < \infty$ we have
\begin{itemize}
\item[(1)] $$E_\omega(x_n(\omega)) \sim n^{-\frac{1}{\alpha}};$$
\item[(2)]  $$E_\omega(x_n^\prime(\omega)-\frac{1}{2}) \sim
n^{-\frac{1}{\alpha}};$$
\item[(3)] $$m\times m\{ \hat R >n\} \sim n^{1-\frac{1}{\alpha}}.$$
\end{itemize}
\end{proposition}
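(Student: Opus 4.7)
The plan is to exploit the approximate additive structure of the backward iteration near the neutral fixed point $x=0$ by changing variables to $u_n(\omega):=x_n(\omega)^{-\alpha}$. A direct Taylor expansion at the origin gives, for the backward step $y=T_\alpha^{-1}(x)$ with $x$ small,
\[
 y^{-\alpha}-x^{-\alpha} \;=\; \alpha\,2^{\alpha} \;+\; O(y^{\alpha}),
\]
whereas for $y=T_\beta^{-1}(x)$ with $\beta>\alpha$,
\[
 y^{-\alpha}-x^{-\alpha} \;=\; \alpha\,2^{\beta}\,y^{\beta-\alpha} \;+\; O(y^{2\beta-\alpha}),
\]
which is of strictly smaller order as $y\to0$. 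Thus each $\alpha$-step in the backward orbit generating $x_n(\omega)$ increments $u_n$ by essentially the positive constant $\alpha\,2^{\alpha}$, while each $\beta$-step contributes only a small non-negative amount. Writing $N_n^{(\alpha)}(\omega)$ for the number of $\alpha$-symbols in the length-$n$ word $(\alpha(\omega),\alpha(\varphi\omega),\dots,\alpha(\varphi^{n-1}\omega))$, one therefore expects $u_n(\omega)\approx \alpha\,2^{\alpha}\,N_n^{(\alpha)}(\omega)$ to leading order.

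To prove (1), I would first make this heuristic recursion rigorous by obtaining two-sided constants $c_1,c_2>0$ such that, once $x_n$ is small enough, every $\alpha$-step increases $u_n$ by an amount in $[c_1,c_2]$ and every $\beta$-step contributes a non-negative amount bounded above by $c_2$. Summing yields $c_1 N_n^{(\alpha)}(\omega)\le u_n(\omega)\le c_2 n$. A standard Chernoff/Hoeffding bound for the i.i.d.\ Bernoulli sequence $\{\mathbf{1}[\alpha(\varphi^k\omega)=\alpha]\}_k$ then gives $m\{\omega:|N_n^{(\alpha)}(\omega)-p_1 n|>p_1 n/2\}\le e^{-c n}$, so on the complementary high-probability set $u_n\sim n$ and hence $x_n\sim n^{-1/\alpha}$. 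On the exponentially small exceptional set I would use the deterministic bound $x_n\le\tfrac12$; integrating over $\omega$ then yields the two-sided estimate (1). Statement (2) is essentially a corollary: from the definition of $x_n'$ for $n\ge2$, $x_n'(\omega)-\tfrac12=\tfrac12 x_n(\varphi\omega)$, and since $\varphi$ preserves Lebesgue measure on $[0,1]$, $E_\omega(x_n'(\omega)-1/2)=\tfrac12 E_\omega(x_n(\omega))$.

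For (3), I would unpack the tower measure of $\{\hat R>n\}$ as
\[
 (m\times m)\{\hat R>n\} \;=\; \sum_{j\ge n}(m\times m)\{z\in\Delta_0:R(z)>j\},
\]
and use the explicit description $\Delta_{0,i}=\{(x,\omega):x\in J_i(\omega)\}$ together with $|J_i(\omega)|=\tfrac12\bigl(x_{i-1}(\varphi\omega)-x_i(\varphi\omega)\bigr)$ for $i\ge2$. After telescoping the inner sum, $(m\times m)\{z\in\Delta_0:R(z)>j\}=\tfrac12 E_\omega(x_j(\omega))$; summing from $j=n$ and applying (1) gives $(m\times m)\{\hat R>n\}\sim\sum_{j\ge n}j^{-1/\alpha}\sim n^{1-1/\alpha}$.

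The main technical obstacle is making the linearized recursion hold uniformly along the random orbit: the error terms in the Taylor expansions are controlled by the current $x$-value, which is precisely what we are trying to estimate, so one has to bootstrap from a crude a priori bound ensuring $x_n(\omega)\to0$ (presumably the content of the Lemma \ref{lem_rough_estimates} referenced in the introduction) before extracting the sharp asymptotic. Once the linearization is in hand, the remaining argument reduces to classical i.i.d.\ large-deviation estimates which make no use of any regularity hypothesis such as $\beta\le1$, consistent with the range $0<\alpha<\beta<\infty$ claimed here and in Remark \ref{new:re}.
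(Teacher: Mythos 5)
Your proof is correct; parts (2) and (3) coincide with the paper's argument, but for part (1) you take a genuinely different route. The paper exploits the pointwise domination $T_\alpha \ge T_\beta$ on $[0,\tfrac12]$ (Lemma~\ref{lem_domination}) to sandwich the random preimage between the deterministic one-map sequences: Lemma~\ref{lem_K_0} shows that whenever the first $n$ symbols contain more than $K_0$ occurrences of $\alpha$, one has $x_n^\alpha \le x_n(\omega) \le x_{\lfloor K_0\rfloor}^\alpha$, and then the known one-map asymptotic $x_m^\alpha\sim m^{-1/\alpha}$ (imported from \cite{Y}) together with Hoeffding's bound (Lemma~\ref{lem_hoeffding}) closes the estimate. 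You instead linearize the backward orbit near $0$ by passing to $u_n=x_n^{-\alpha}$: an $\alpha$-step increments $u_n$ by an amount bounded above and below by positive constants, while a $\beta$-step contributes a nonnegative amount that (by the same domination) is no larger than the $\alpha$-step increment, giving $c_1 N_n^{(\alpha)}(\omega)\le u_n(\omega)\le c_2 n$ deterministically; the same Hoeffding bound then gives $u_n\sim n$ off an exponentially small set. Both routes rest on the identical large-deviation input; yours is more self-contained (it re-derives the one-map rate en route and exposes the renewal mechanism at the neutral fixed point), while the paper's is shorter because it delegates that rate to \cite{Y} and needs only order comparisons rather than Taylor expansions. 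One small tightening: your caveat ``once $x_n$ is small enough'' is unnecessary. Since $t\mapsto(1-(1+t)^{-\alpha})/t$ is decreasing on $(0,1]$, the $\alpha$-step increment $y^{-\alpha}\bigl(1-(1+2^\alpha y^\alpha)^{-\alpha}\bigr)$ lies in $[\,2^\alpha-1,\ \alpha 2^\alpha\,]$ for every $y\in(0,\tfrac12]$, so the two-sided recursion bounds hold uniformly and no bootstrap from a crude a priori bound is needed.
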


Before proving this result, we gather some estimates in a sequence of lemmas.

\begin{lemma}\label{lem_domination}
 For all $x \in [0, \frac{1}{2}]$ $T_\alpha(x) \geq T_\beta(x)$ with strict inequality on the open interval $(0,\frac{1}{2})$. 
 \end{lemma}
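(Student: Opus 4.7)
The plan is to reduce the comparison directly to a one-variable inequality about powers. Using the defining formula \eqref{eqn_lsv} on the left branch,
\begin{equation*}
T_\alpha(x) - T_\beta(x) = x\bigl(2^\alpha x^\alpha\bigr) - x\bigl(2^\beta x^\beta\bigr) = x\bigl((2x)^\alpha - (2x)^\beta\bigr).
\end{equation*}
So the claim is equivalent to showing that $(2x)^\alpha \geq (2x)^\beta$ on $[0,1/2]$, with strict inequality on $(0,1/2)$.

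First I would dispose of the endpoints: at $x=0$ both sides of the original inequality vanish, and at $x=1/2$ we have $2x=1$ so $(2x)^\alpha = (2x)^\beta = 1$, yielding equality in both boundary cases. For the interior, set $y = 2x \in (0,1)$; since $\log y < 0$ and $\alpha < \beta$, we have $\alpha \log y > \beta \log y$, hence $y^\alpha > y^\beta$. Multiplying by the strictly positive factor $x$ gives $T_\alpha(x) - T_\beta(x) > 0$ on $(0,1/2)$.

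This is the entire argument; there is no real obstacle, as the lemma amounts to the elementary fact that on $(0,1)$ smaller exponents give larger powers. The reason it is worth isolating as a lemma is its role later: the ordering $T_\alpha \geq T_\beta$ on the left branch lets one dominate the random backward orbit $x_n(\omega)$ by the deterministic orbit under $T_\alpha$, which will be used in the proof of Proposition \ref{prop_expectation_estimates}.
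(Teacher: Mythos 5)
Your proof is correct, and since the paper's proof of this lemma is literally the single sentence ``This is a straightforward calculation,'' you have simply filled in that calculation in the natural way (factoring out $x$ and comparing $(2x)^\alpha$ with $(2x)^\beta$ on $(0,1)$). This is the same approach the paper intends.
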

\begin{proof} This is a straightforward calculation.
\end{proof}
\begin{corollary}\label{cor_domination}
For $0\leq x \leq y < \frac{1}{2}$ we have 
$T_\alpha (y) \geq T_\beta(x)$ with strict inequality in either 
situation:  $0<x\leq y < \frac{1}{2}$ or $0 \leq x < y \leq \frac{1}{2}$.
\end{corollary}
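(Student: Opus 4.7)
The plan is to chain the pointwise domination supplied by Lemma \ref{lem_domination} with the monotonicity of $T_\alpha$ restricted to the left branch $[0,\tfrac12]$. Concretely, once strict monotonicity of $T_\alpha$ on $[0,\tfrac12]$ is in hand, the inequality $T_\alpha(y)\ge T_\alpha(x)\ge T_\beta(x)$ follows immediately, and strictness will transfer from whichever inequality can be made strict under the given hypotheses.

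First I would verify strict monotonicity. From the formula (\ref{eqn_lsv}), on $[0,\tfrac12]$ one has $T_\alpha(x)=x+2^\alpha x^{1+\alpha}$, whose derivative $1+2^\alpha(1+\alpha)x^\alpha$ is strictly positive on $[0,\tfrac12]$. Hence $x\le y$ implies $T_\alpha(x)\le T_\alpha(y)$, and the inequality is strict whenever $x<y$.

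Next I would assemble the corollary. Given $0\le x\le y<\tfrac12$, monotonicity yields $T_\alpha(x)\le T_\alpha(y)$, and Lemma \ref{lem_domination} yields $T_\beta(x)\le T_\alpha(x)$; chaining gives the non-strict bound. For strictness, I would split into the two cases offered by the statement. If $0<x\le y<\tfrac12$, then $x\in(0,\tfrac12)$, so the strict part of Lemma \ref{lem_domination} gives $T_\beta(x)<T_\alpha(x)\le T_\alpha(y)$. If $0\le x<y\le\tfrac12$, then strict monotonicity on $[0,\tfrac12]$ gives $T_\alpha(x)<T_\alpha(y)$, which combined with $T_\beta(x)\le T_\alpha(x)$ yields $T_\beta(x)<T_\alpha(y)$.

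There is essentially no obstacle here: the corollary is a routine combination of pointwise comparison with branch monotonicity, and the only small bookkeeping point is making sure one of the two constituent inequalities is strict in each of the two hypothesis cases.
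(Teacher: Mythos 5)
Your argument is correct and is exactly the implicit reasoning the paper intends: the corollary is stated without proof, to be read as an immediate consequence of Lemma \ref{lem_domination} together with the strict monotonicity of $T_\alpha$ on the left branch, which is precisely the chain $T_\beta(x)\le T_\alpha(x)\le T_\alpha(y)$ with strictness injected at the appropriate link in each of the two cases. Nothing to add.
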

 
We will estimate the position of $x_n(\omega)$ by comparing to the sequence of non-random backwards iterates constructed with only one map; either 
always choosing $T_\alpha|_{[0,\frac{1}{2}]}^{-1}$ or 
$T_\beta|_{[0,\frac{1}{2}]}^{-1}$ in place of $T_\alpha(\omega)|_{[0,\frac{1}{2}]}^{-1}$ in equation (\ref{def_backorbit}).  Denote these 
non-random iterates by $x_n^\alpha$ and $x_n^\beta$ respectively. 
It is immediate from Lemma \ref{lem_domination} that for every $n,~ x_n^\alpha \leq x_n^\beta$.  
Furthermore, 
it is well-known that $x_n^{\alpha}\sim n^{-\frac{1}{\alpha}}$ with 
similar estimates for the parameter $\beta$.
(See, for example, estimates at the beginning of Section 6.2 of \cite{Y}.)

We begin with a very rough (but intuitively obvious) estimate 
on $x_n(\omega)$.

\begin{lemma}\label{lem_rough_estimates}
For all $n \geq 1$ and for all $\omega$ 
$$ x_n^\alpha \leq x_n(\omega)\leq x_n^\beta.$$
\end{lemma}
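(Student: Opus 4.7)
The plan is to prove this by induction on $n$, exploiting two monotonicity facts: (i) the branches $T_{\alpha}^{-1}|_{[0,1/2]}$ and $T_{\beta}^{-1}|_{[0,1/2]}$ are both increasing functions on their range, since $T_\alpha$ and $T_\beta$ are increasing on $[0,1/2]$; and (ii) Lemma \ref{lem_domination} ($T_\alpha \ge T_\beta$ on $[0,1/2]$) implies, by inverting, that $T_\alpha^{-1}|_{[0,1/2]}(y) \le T_\beta^{-1}|_{[0,1/2]}(y)$ for every relevant $y$.

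The base case $n=1$ is immediate from the definition since $x_1(\omega) \equiv \tfrac{1}{2}$ and likewise $x_1^{\alpha}= x_1^{\beta}=\tfrac{1}{2}$. For the inductive step, assume $x_{n-1}^\alpha \le x_{n-1}(\varphi\omega) \le x_{n-1}^\beta$ for every $\omega$, and split on the value of $\alpha(\omega)$.

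If $\alpha(\omega) = \alpha$, applying the increasing function $T_\alpha^{-1}|_{[0,1/2]}$ to the inductive hypothesis gives
\[
x_n^\alpha \;=\; T_\alpha^{-1}(x_{n-1}^\alpha) \;\le\; T_\alpha^{-1}(x_{n-1}(\varphi\omega)) \;=\; x_n(\omega) \;\le\; T_\alpha^{-1}(x_{n-1}^\beta) \;\le\; T_\beta^{-1}(x_{n-1}^\beta) \;=\; x_n^\beta,
\]
where the last inequality uses (ii). If instead $\alpha(\omega) = \beta$, the symmetric chain
\[
x_n^\alpha \;=\; T_\alpha^{-1}(x_{n-1}^\alpha) \;\le\; T_\beta^{-1}(x_{n-1}^\alpha) \;\le\; T_\beta^{-1}(x_{n-1}(\varphi\omega)) \;=\; x_n(\omega) \;\le\; T_\beta^{-1}(x_{n-1}^\beta) \;=\; x_n^\beta
\]
closes the induction.

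There is no real obstacle here: the argument is a two-line monotonicity sandwich once Lemma \ref{lem_domination} (or equivalently Corollary \ref{cor_domination}) is in hand. The only point to be careful about is ensuring that all iterates remain in $[0,1/2]$ so the inverse branches are well-defined and the domination can be applied at each step, but this is guaranteed by the construction \eqref{def_backorbit}.
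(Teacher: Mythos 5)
Your proof is correct, and it takes a genuinely different route from the paper's. The paper argues by contradiction and by pushing the dynamics \emph{forward}: it supposes $x_n(\omega) < x_n^\alpha$, locates the first index $k$ at which the random choice is $\beta$, and uses Corollary \ref{cor_domination} to propagate the strict inequality forward through the orbit, ultimately forcing the absurdity $\tfrac12 < \tfrac12$. Your proof instead argues directly by induction on $n$ in the natural order given by the backward recursion \eqref{def_backorbit}, observing that the inverse branches $T_\alpha^{-1}|_{[0,1/2]}$ and $T_\beta^{-1}|_{[0,1/2]}$ are increasing and that $T_\alpha \geq T_\beta$ on $[0,\tfrac12]$ inverts to $T_\alpha^{-1} \leq T_\beta^{-1}$, then sandwiches $x_n(\omega)$ in two short monotonicity chains depending on the value of $\alpha(\omega)$. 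One small point worth making explicit is the inversion step: if $y = T_\alpha(x)$ then $T_\beta(x) \le y$, so applying the increasing $T_\beta^{-1}$ gives $x = T_\alpha^{-1}(y) \le T_\beta^{-1}(y)$; you invoke this implicitly as ``inverting Lemma \ref{lem_domination}'' and it is fine, but the one-line justification belongs in a careful write-up. On the whole your induction is cleaner and more mechanical than the paper's contradiction argument (which papers over some bookkeeping with ``iterating this argument''), and it also has the mild virtue of requiring the quantifier ``for all $\omega$'' in the inductive hypothesis to be stated explicitly, which you do correctly.
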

\begin{proof}
Suppose, to the contrary, $x_n(\omega) < x_n^\alpha$ for some $n,\omega$.  Note that if 
$\alpha(\varphi^k(\omega))= \alpha$ for all $k$ then $x_n(\omega) = x_n^\alpha$, contradicting our assumption.   Let $k\in \{ 0, 1, \dots n-1\}$ be smallest integer such that 
$\alpha(\varphi^k(\omega))= \beta$. 
Then $x_{n-k}(\varphi^k(\omega)) < x_{n-k}^\alpha$ since $T^k_\alpha$ is increasing 
and 
$$x_{n-k-1}(\varphi^{k+1}(\omega)) = T_\beta(x_{n-k}(\varphi^k(\omega)))
< T_\alpha(x_{n-k}(\varphi^k(\omega)) = x_{n-k-1}^\alpha.$$
Here we have invoked Corollary \ref{cor_domination}. 
Iterating this argument for each index where $\alpha(\varphi^j(\omega)) = \beta$ gives 
$$\frac{1}{2} = x_1(\varphi^{n-1}(\omega)) < x_1^\alpha = \frac{1}{2}$$
which is again a contradiction.  
A similar argument shows $x_n(\omega) \leq x_n^\beta$ for all 
$n, \omega$.  
\end{proof}

\begin{lemma}\label{lem_K_0}
Suppose $n$ is given and $K_0\in [0,n-1]$ is fixed. 
 Suppose $\omega \in [0,1]$ is such that
$$\#\{ j \in \{0, 1, \dots n-1\}~ | ~\alpha(\varphi^j(\omega))=\alpha\} > K_0.$$
Then $x_n^\alpha \leq x_n(\omega) \leq x_{\lfloor K_0 \rfloor}^\alpha$
\end{lemma}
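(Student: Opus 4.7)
The lower bound $x_n^\alpha \leq x_n(\omega)$ is already furnished by Lemma \ref{lem_rough_estimates}, so the plan is to focus on the upper bound $x_n(\omega)\leq x_{\lfloor K_0\rfloor}^\alpha$.

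The strategy is a simple \emph{erase-the-$\beta$-steps} comparison. Unrolling the recursion, one has
$$x_n(\omega) = T_{\alpha(\omega)}^{-1}\circ T_{\alpha(\varphi\omega)}^{-1}\circ\cdots\circ T_{\alpha(\varphi^{n-2}\omega)}^{-1}\bigl(\tfrac{1}{2}\bigr),$$
a composition of $n-1$ inverse branches on $[0,\tfrac{1}{2}]$, each equal to either $T_\alpha^{-1}$ or $T_\beta^{-1}$. The key claim is that replacing every occurrence of $T_\beta^{-1}$ by the identity only enlarges the value of this composition. Two elementary facts drive this: (i) on $[0,\tfrac{1}{2}]$ one has $T_\gamma(x)\geq x$ for $\gamma\in\{\alpha,\beta\}$, hence $T_\gamma^{-1}(y)\leq y$; and (ii) both inverse branches are monotone increasing on $[0,\tfrac{1}{2}]$.

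I would verify the claim by induction from the innermost factor outward. Let $y_k$ and $y_k'$ denote the partial values after $k$ applications in the original composition and the $\beta$-erased composition, respectively, starting from $y_0=y_0'=\tfrac{1}{2}$. If the next map is $T_\alpha^{-1}$, it is present in both compositions, and the monotone increasing property propagates $y_{k-1}'\geq y_{k-1}$ to $y_k'\geq y_k$. If the next map is $T_\beta^{-1}$, then it is replaced by the identity in the erased composition, and $y_k=T_\beta^{-1}(y_{k-1})\leq y_{k-1}\leq y_{k-1}'=y_k'$. Iterating to the outermost step gives $x_n(\omega)\leq (T_\alpha^{-1})^{K}(\tfrac{1}{2})=x_{K+1}^\alpha$, where $K:=\#\{j\in\{0,\ldots,n-2\}:\alpha(\varphi^j\omega)=\alpha\}$.

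The remainder is bookkeeping. Since the count in the hypothesis is integer-valued, $\#\{j\in\{0,\ldots,n-1\}:\alpha(\varphi^j\omega)=\alpha\}>K_0$ forces this count to be at least $\lfloor K_0\rfloor+1$; dropping the single index $j=n-1$ decreases it by at most one, so $K\geq \lfloor K_0\rfloor$. As $\{x_n^\alpha\}_{n\geq 1}$ is monotone decreasing in $n$, this yields $x_n(\omega)\leq x_{K+1}^\alpha\leq x_{\lfloor K_0\rfloor}^\alpha$. The only non-routine ingredient is the $\beta$-erasure comparison; once it is in place, the counting step is automatic, and I do not expect any real obstacle.
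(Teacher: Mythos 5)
Your argument is correct and rests on the same mechanism as the paper's: exploiting $T_\beta(x)\geq x$ on $[0,\tfrac12]$ to ``erase'' the $\beta$-steps and reduce to the pure-$\alpha$ backward orbit, combined with monotonicity of the inverse branches and a counting step. The paper phrases this as a contradiction along the forward orbit of $x_n(\omega)$ (showing it would reach $\tfrac12$ too soon), while you run the comparison directly on the unrolled inverse-branch composition; this is a purely presentational difference, and your direct version is if anything a bit cleaner.
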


\begin{proof}  The left-hand inequality is given by Lemma \ref{lem_rough_estimates}.  For the other side, 
suppose $ x_n(\omega)>x_{\lfloor K_0 \rfloor}^\alpha $.  Consider the following 
sequence of points 
$$
y_{n-j}:= \left\{
\begin{array}{ll}
T_\alpha(x_n(\varphi^j \omega)) & \textnormal {if } \alpha(\varphi^j \omega)= \alpha, \cr
x_n(\varphi^j \omega) & \textnormal{if } \alpha(\varphi^j \omega)= \beta.\end{array}
\right . 
$$
For each $j=0,1, \dots n-1$ define
$K(j) := \#\{ i \in \{0, 1, \dots j \} ~|~ \alpha(\varphi^i \omega) = \alpha\}$.

For example, $K(0)=0$ or  $1$, $K(j) \leq j+1$ and $K(n-1) > \lfloor K_0 \rfloor$ by hypothesis. 

By an argument similar to the proof of Lemma \ref{lem_rough_estimates}, using $T_\beta$ compared to the identity map we have 
$$ y_{n-j}\leq x_{n-j}(\omega) , \textnormal{ for all } 
j= 0, 1, \dots n-1.$$
On the other hand, comparing $T_\alpha$ to the identity map and applying Lemma \ref{lem_domination} gives
$$ x_{\lfloor K_0 \rfloor -j}^\alpha < y_{n-j}, \textnormal{ for all } 
j= 0, 1, \dots n-1.$$

Pick $j_0$ so that $K(j_0) = \lfloor K_0 \rfloor - 1$. Note that $j_0 < n-1$.
Then

$$\frac{1}{2} = x_1^\alpha=x^\alpha_{\lfloor K_0 \rfloor - (\lfloor K_0 \rfloor -1)} < y_{n-j_0} \leq x_{n-j_0}(\omega)$$
which contradicts the hitting time of $x_n(\omega)$ to the interval 
$[\frac{1}{2}, 1]$.
\end{proof}

Pick any $0<p_0 < p_1$, fix $n>1$ and let $K_0 :=n p_0$. 
There are many standard large deviation estimates for i.i.d. random variables that will ensure that \emph{most} $\omega$ encounter at least $K_0$ instances of $\alpha(\varphi^i \omega) = \alpha$ in their first 
$n$ iterates.  As we are aiming for exponential decay in the tail estimate, we invoke a classical result due to Hoeffding \cite{Ho} that works especially well for our case of Bernoulli random variables.  It is precisely at this point that we avoid generating an upper bound constraint on $\beta$ as was the case in Gou\"ezel \cite{G2}.  If instead we were to use the more
general estimates from the well-known Berry-Ess\'een Theorem (e.g. Theorem 1, Section XVI.5 in \cite{F}), for example,  we would obtain power law decay in the tail leading to the requirement $\beta < \frac{3}{2} \alpha$ in order to complete the proof.

\begin{lemma}\label{lem_hoeffding}
For every $n\geq 1$ 
$$Pr\{\omega ~|~ \#\{j \in \{0,1, \dots n-1\} ~|~ \alpha(\varphi^j \omega) = \alpha\}  \leq K_0\} \leq  \exp[ -2n(p_1 - p_0)^2].$$
\end{lemma}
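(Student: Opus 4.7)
The plan is to reduce the statement directly to a textbook Hoeffding bound for i.i.d.\ Bernoulli trials. The only non-probabilistic ingredient I need is the observation that, under Lebesgue measure on $[0,1]$, the sequence
$$X_j(\omega):=\mathbf{1}_{\{\alpha(\varphi^j\omega)=\alpha\}},\qquad j=0,1,2,\dots$$
consists of i.i.d.\ Bernoulli random variables with success probability $p_1$. This is immediate from the construction of $\varphi$ in (\ref{alpha}): $\varphi$ is the piecewise linear full-branch map with branches $[0,p_1)\to[0,1]$ and $[p_1,1]\to[0,1]$, which preserves Lebesgue measure and is measurably conjugate to the one-sided $(p_1,p_2)$-Bernoulli shift via the address coding $\omega\mapsto(\alpha(\varphi^j\omega))_{j\ge 0}$. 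Under this conjugacy, each $X_j$ is precisely the $j$-th coordinate indicator, so the $X_j$'s are independent and each is Bernoulli($p_1$).

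Once this is in hand, write $S_n:=\sum_{j=0}^{n-1}X_j$, so that $E_\omega(S_n)=np_1$ and the event in the lemma is exactly
$$\{S_n\le K_0\}=\{S_n\le np_0\}=\{S_n-np_1\le -n(p_1-p_0)\}.$$
Since the $X_j$ take values in $[0,1]$, the classical one-sided Hoeffding inequality \cite{Ho} yields, for every $t>0$,
$$\Pr\bigl\{S_n-np_1\le -nt\bigr\}\le\exp\bigl(-2nt^2\bigr).$$
Applying this with $t=p_1-p_0>0$ (which is positive by the standing choice $0<p_0<p_1$) gives the advertised bound $\exp[-2n(p_1-p_0)^2]$.

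There is essentially no obstacle here: the argument is a one-line application of Hoeffding once the i.i.d.\ structure of the coding is noted. The only real decision is the choice of concentration inequality, and the authors' remark just above the lemma already explains why Hoeffding (as opposed to, say, Berry--Ess\'een) is the correct tool: it produces an exponential tail, which later ensures that the Hoeffding error is negligible compared with the polynomial tower-tail estimates and so avoids any auxiliary upper bound on $\beta$.
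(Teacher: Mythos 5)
Your proof is correct and takes essentially the same route as the paper's: both reduce to the one-sided Hoeffding bound for the i.i.d.\ Bernoulli coding induced by $\varphi$. The only cosmetic difference is that the paper counts occurrences of $\beta$ and applies the upper-tail version of Hoeffding's Theorem 1 (after rewriting the event $\{\#\alpha\le K_0\}$ as $\{\#\beta\ge n-K_0\}$), whereas you count occurrences of $\alpha$ and apply the lower-tail version directly; by the symmetry $S_n^\alpha + S_n^\beta = n$ these are the same estimate. Your version is marginally more streamlined, and you make explicit the (easy) point, left tacit in the paper, that $(\alpha(\varphi^j\omega))_{j\ge 0}$ is an i.i.d.\ $(p_1,p_2)$-Bernoulli sequence under Lebesgue measure.
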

\begin{proof}
Let $S_n$ count the number of times the value $\beta$ occurs in the first $n$ iterates. 
Observe that 
\begin{equation}\label{eqn_hoeffding}
\begin{split}
Pr\{\omega ~|~ \#\{j \in \{0,1, \dots n-1\} ~|~ \alpha(\varphi^j \omega) = \alpha\}  \leq K_0\}\\
\leq
Pr\{\omega ~|~ \#\{j \in \{0,1, \dots n-1\} ~|~ \alpha(\varphi^j \omega) = \beta\}   \geq n-K_0\}
\end{split}
\end{equation}
In Theorem 1 of \cite{Ho} let $\mu=p_2$ and let $t= p_1 - p_0 < p_1= 1- \mu$.
Then the bottom probability in equation (\ref{eqn_hoeffding}) equals
$$
Pr\{ S_n - \mu\,  n \geq (1- p_0 - p_2) \, n\}
= Pr\{\frac{S_n}{n} - \mu  \geq t \}.$$
The exponential estimate now follows from (2.3) in Theorem 1 of \cite{Ho}.
\end{proof}

\newpage

\begin{proof}(Of Proposition \ref{prop_expectation_estimates})
\begin{itemize}
\item[(1)]  For fixed $n$, with $p_0< p_1$ as above, let $K_0=p_0 n$. Set $$G_n = \{ \omega ~|~ \{\omega ~|~ \#\{j \in \{0,1, \dots n-1\} ~|~ \alpha(\varphi^j \omega) = \alpha\}  > K_0\}.$$
Lemma \ref{lem_hoeffding} estimates 
$Pr(I \setminus G_n) \leq \exp[-2n(p_1-p_0)^2]$.

Now 
$$E_\omega(x_n(\omega)) = \int_{G_n} x_n(\omega) \, d\omega
+ \int_{I \setminus G_n} x_n(\omega) \, d\omega
\leq x_{\lfloor K_0 \rfloor}^\alpha  + \frac{1}{2} Pr(I \setminus G_n)$$
where we have used Lemma \ref{lem_K_0} for the first term and
the fact that $x_n(\omega) \leq \frac{1}{2}$ for the second term. 
Now
$x_{\lfloor K_0 \rfloor}^\alpha \leq C_1(K_0^{-\frac{1}{\alpha}})
\leq C_2(n^{-\frac{1}{\alpha}})$ when $K_0 = p_0 n$.
On the other hand, the second term tends to zero exponentially fast. Since
$x_n(\omega) \geq  x_n^\alpha \geq C_3 n^{-\frac{1}{\alpha}}$ by 
Lemma \ref{lem_rough_estimates} and the fact that $x_n^\alpha \sim n^{-\frac{1}{\alpha}}$ we 
have established the required estimate on the expectation. 
\item[(2)]
This follows from part (1) immediately, since both maps have the same linear second branch.  
\item[(3)]  
We have to get an estimate on
$$(m\times m)\{\hat R >n\}=\sum\limits_{l\geq n+1}(m\times m)(\Delta_l)=\sum\limits_{l\geq n+1}\sum\limits_{i\geq l+1}(m\times m)(\Delta_{0,i}).$$
First, observe that
\begin{equation}\label{boxes}
\begin{split}
(m\times m)(\Delta_{0,i})
&=\sum\limits_{j=1}\limits^{2^i}(m\times m)(\Delta^j_{0,i})=\int\limits_0\limits^1 J_i(\omega)d\omega\\
&=\int\limits_0\limits^1 [x'_{i-1}(\omega)-x'_{i}(\omega)]d\omega\\
&=E[x'_{i-1}(\omega)]-E[x'_{i}(\omega)]\\
&=\frac{1}{2}[E(x_{i-1}(\omega))-E(x_{i}(\omega))].
\end{split}
\end{equation}
Therefore, by equation (\ref{boxes}) and part (1) of this proposition we have
\begin{equation*}
\begin{split}
(m\times m)\{\hat R >n\}
&=\frac{1}{2}\sum\limits_{l\geq n+1}\sum\limits_{i\geq l+1}[E(x_{i-1}(\omega))-E(x_{i}(\omega))]\\
&=\frac{1}{2}\cdot[(Ex_{n+1}(\omega)-Ex_{n+2}(\omega))+2(Ex_{n+2}(\omega)-Ex_{n+3}(\omega))\\
&+3(Ex_{n+3}(\omega)-Ex_{n+4}(\omega))+...]\\
&=\frac{1}{2}\cdot[E(x_{n+1}(\omega))+E(x_{n+2}(\omega))+E(x_{n+3}(\omega))+...]\\
&\sim\sum\limits_{k>n} k^{-\frac{1}{\alpha}}\\
&\sim  n^{1-\frac{1}{\alpha}}.
\end{split}
\end{equation*}
\end{itemize}
\end{proof}

\subsection{Distortion}\label{B}
\begin{lemma}\label{distcoeff}
If $(x,\cdot),(y,\cdot)\in \Delta_0$ and $s((x,\cdot),(y,\cdot))=n,$ then
$|x-y|\leq \theta ^n.$
\end{lemma}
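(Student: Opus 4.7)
My plan is to establish that the return map $F^R$ is uniformly expanding in the spatial coordinate on each element $\Delta^j_{0,i}$ of the base partition, and then to iterate along the common symbolic itinerary prescribed by the separation time.

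The key preliminary observation is that on a fixed $\Delta^j_{0,i}$ the spatial action of $F^R = S^i$ is given by a single, $\omega$-independent composition of branches. Indeed, for $(x,\omega)\in \Delta^j_{0,i}$ the word $(\alpha(\omega), \alpha(\varphi\omega),\dots,\alpha(\varphi^{i-1}\omega))\in\{\alpha,\beta\}^i$ is prescribed by $j$; call it $(a_1,\dots,a_i)$. Since both $T_\alpha$ and $T_\beta$ coincide with $2x-1$ on $(1/2,1]$, the first iterate sends $x\in J_i(\omega)$ into $I_{i-1}\subset[0,1/2]$ irrespective of $a_1$, and the remaining $i-1$ iterates apply the left branches $T_{a_k}|_{[0,1/2]}$, whose derivatives $T'_\gamma(y)=1+(1+\gamma)2^\gamma y^\gamma$ are $\geq 1$ on $[0,1/2]$. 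Therefore the spatial derivative of $F^R$ satisfies $\partial_x F^R \geq 2$ pointwise on every $\Delta^j_{0,i}$, and by the mean value theorem, for any $(x,\cdot),(y,\cdot)$ lying in a common $\Delta^j_{0,i}$ the spatial distance at least doubles under one application of $F^R$.

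The conclusion then follows by iteration: if $s((x,\cdot),(y,\cdot)) = n$, then for each $0 \leq k < n$ the pair $(F^R)^k(x,\cdot),(F^R)^k(y,\cdot)$ lies in a common base element, so the one-step estimate can be applied $n$ times in succession. Since the spatial section of $\Delta_0=(1/2,1]\times[0,1)$ has diameter $1/2$, this yields $2^n|x-y|\leq 1/2$, whence $|x-y|\leq 2^{-n-1}$, and so any $\theta \in [1/2,1)$ works. I do not anticipate any substantive obstacle here; the only point requiring explicit care is the verification that the spatial dynamics on each $\Delta^j_{0,i}$ is genuinely independent of $\omega$, a feature built into the construction of the partition but which must be invoked.
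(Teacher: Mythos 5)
Your proposal is correct and takes essentially the same route as the paper: both arguments rest on the observation that $T_\omega^R$ is $\omega$-independent on each $\Delta^j_{0,i}$ and has spatial derivative $\geq 2$ there (the first step through the linear right branch already contributes the factor $2$, and the subsequent left-branch iterates only expand further), after which the estimate is iterated $n$ times along the common itinerary encoded by the separation time. Your version is slightly more explicit about the derivative computation and uses the diameter $\tfrac12$ of the fibre of $\Delta_0$ to get the marginally sharper $|x-y|\leq 2^{-n-1}$ rather than the paper's $2^{-n}$, but the choice $\theta=\tfrac12$ and the substance of the argument are identical.
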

\begin{proof}
Set $\theta:=\frac{1}{2}<1$ and observe that on $\Delta_{0}, DT_\omega^R\geq 2=\theta^{-1}.$ Thus, if 
$(x,\cdot),(y,\cdot)$ lie in a common atom $\Delta^j_{0,i}$ such that $x,y \in J_i(\omega)\subseteq (T_\omega^R)^{-1}(\frac{1}{2},1],$
then
$$\min DT_\omega^R\leq \left|\frac{T_\omega^Rx-T_\omega^Ry}{x-y}\right|\leq \frac{1}{|x-y|}.$$
Therefore, $|x-y|\leq \theta$ and the result follows by induction on $k\leq n.$
\end{proof}

\begin{lemma}\label{distlog}
There exists a constant $C>0$ such that for $(x,\cdot),(y,\cdot)\in \Delta^j_{0,i},$
$$|\log\frac{DT^R_\omega(x)}{DT^R_\omega(y)}|\leq C|T^R_\omega(x)-T^R_\omega(y)|\leq C.$$
\end{lemma}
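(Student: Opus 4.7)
The plan is to reduce the distortion of $T^R_\omega$ to a telescoping sum via the chain rule and to control each summand using the explicit form of the derivatives $DT_\gamma$ together with a Koebe-style estimate on the intermediate compositions. Set $y_k := T^k_\omega(x)$, $z_k := T^k_\omega(y)$ and $\gamma_k := \alpha(\varphi^k\omega)$. Because $(x,\omega),(y,\omega)\in\Delta^j_{0,i}$, the two orbits follow a common itinerary and $R=i$; the chain rule gives
\[
\log \frac{DT^R_\omega(x)}{DT^R_\omega(y)} \;=\; \sum_{k=0}^{R-1}\bigl[\log DT_{\gamma_k}(y_k)-\log DT_{\gamma_k}(z_k)\bigr].
\]
The $k=0$ term vanishes since the first iterate applies the affine right branch $2x-1$, while for $1\le k\le R-1$ both $y_k,z_k$ lie in $I_{R-k}(\varphi^k\omega)\subset (0,1/2]$ and the left branch of $T_{\gamma_k}$ is used.

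A direct computation from $DT_\gamma(u)=1+2^\gamma(1+\gamma)u^\gamma$ and $D^2T_\gamma(u)=2^\gamma\gamma(1+\gamma)u^{\gamma-1}$ yields $|D^2T_\gamma/DT_\gamma|(u)\le C\,u^{\gamma-1}$ uniformly in $\gamma\in(0,1]$. Applying the mean value theorem to $\log DT_{\gamma_k}$ then produces
\[
|\log DT_{\gamma_k}(y_k)-\log DT_{\gamma_k}(z_k)| \;\le\; C\,\min(y_k,z_k)^{\gamma_k-1}\,|y_k-z_k|.
\]
To exchange $|y_k-z_k|$ for $|y_R-z_R|$ I would invoke bounded distortion of the intermediate maps $T^{R-k}_{\varphi^k\omega}:I_{R-k}(\varphi^k\omega)\to (1/2,1]$. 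This is where the Koebe Principle enters: a short calculation shows that each left branch of $T_\gamma$ has negative Schwarzian derivative precisely when $\gamma\le 1$ (for $\gamma>1$ the sign flips near the neutral fixed point), so the composition also has negative Schwarzian. Koebe, applied after suitably extending the image, then delivers $|y_k-z_k|\le C\,|I_{R-k}(\varphi^k\omega)|\,|y_R-z_R|$.

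Combining the three estimates, the proof reduces to showing that
\[
\sum_{k=1}^{R-1}\min(y_k,z_k)^{\gamma_k-1}\,|I_{R-k}(\varphi^k\omega)|
\]
is bounded uniformly in $R$ and $\omega$. The uniform lower bound $x_m(\omega)\ge x_m^\alpha\sim m^{-1/\alpha}$ from Lemma \ref{lem_rough_estimates} controls $\min(y_k,z_k)^{\gamma_k-1}$ from above, while estimates for $|I_m(\omega)|$ in the same spirit as those in Subsection \ref{A} (again leveraging the lower bound on $x_m(\omega)$) provide the necessary decay of the interval sizes. The resulting summand is of order $(R-k)^{-p}$ with $p>1$, so the series is summable uniformly; since $|y_R-z_R|\le 1/2$, both inequalities in the lemma follow.

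The main obstacle is the Koebe step. One must identify an appropriate extension of the image $(1/2,1]$ on which $T^{R-k}_{\varphi^k\omega}$ is still a diffeomorphism with definite Koebe space, and verify that negative Schwarzian derivative is preserved under composition of random left branches; it is precisely this part of the argument that forces the hypothesis $\beta\le 1$, as noted in Remark \ref{new:re}. Once this distortion bootstrap is granted, the remaining work is a direct calculation with the random backwards points $x_n(\omega)$ already analyzed in Subsection \ref{A}.
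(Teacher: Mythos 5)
Your proposal reuses the two key ingredients of the paper (explicit control of the derivatives of the LSV branches; negative Schwarzian derivative for $\gamma\le 1$ plus the Koebe Principle), but it deploys them quite differently. The paper applies Koebe \emph{once} to the full return composition $g=T^R_\omega\vert_{J_i(\omega)}$, after extending the domain to $J=[x'_{i+1}(\omega),2]$ so that $g(J)$ contains a $\kappa$-scaled neighbourhood of $g(J_i(\omega))=(\tfrac12,1]$ with a $\kappa$ uniform in $\omega$ and $i$; this yields the distortion bound in a single step with no summation. You instead split $\log\frac{DT^R_\omega(x)}{DT^R_\omega(y)}$ by the chain rule, bound each increment via $|D^2T_\gamma/DT_\gamma|$, and use Koebe on the \emph{intermediate} compositions $T^{R-k}_{\varphi^k\omega}$ to convert $|y_k-z_k|$ into $|y_R-z_R|$; this reduces everything to showing $\sum_{k=1}^{R-1}\min(y_k,z_k)^{\gamma_k-1}\,|I_{R-k}(\varphi^k\omega)|$ is uniformly bounded. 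That is a legitimate alternative route in principle, but it pushes the entire difficulty into a summability claim that your write-up does not actually establish.

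This is where the gap lies. You assert that the summand is of order $(R-k)^{-p}$ with $p>1$, justified only by the crude two-sided bound $x_m^\alpha\le x_m(\omega)\le x_m^\beta$ of Lemma~\ref{lem_rough_estimates} and unspecified estimates on $|I_m(\omega)|$ ``in the spirit of Subsection~\ref{A}''. If one actually inserts those rough bounds, setting $m=R-k$, one gets $\min(y_k,z_k)^{\gamma_k-1}\le (x_{m+1}^\alpha)^{\gamma_k-1}\lesssim m^{(1-\gamma_k)/\alpha}$ and, absent anything sharper, $|I_m(\varphi^k\omega)|\le x_m(\varphi^k\omega)\lesssim m^{-1/\beta}$. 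The product is $m^{(1-\gamma_k)/\alpha-1/\beta}$, which in the worst case $\gamma_k=\alpha$ gives exponent $\frac{1}{\alpha}-1-\frac{1}{\beta}$; this is $<-1$ only when $\alpha>\beta$, i.e.\ never. So the stated estimates do \emph{not} deliver a summable series, and Subsection~\ref{A} only provides control of the \emph{expectations} $E_\omega(x_n)$, not the pointwise bounds on $x_n(\omega)$ or $|I_m(\omega)|$ that your sum requires uniformly in $\omega$. Closing this step would require a genuinely different idea — for instance exploiting the recursion $a_{k+1}=T_{\gamma_k}(a_k)$ to recognize the summand as (a bounded multiple of) a telescoping increment of $\int \xi^{\alpha-1}\,d\xi$ — but none of this appears in the proposal, and the global single-application of Koebe in the paper's proof avoids the problem entirely.
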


\begin{proof}
It is trivial for $J_i,i=1$ since $T_{\alpha(\omega)}(x)=2x-1.$
We apply the Koebe principle to prove the result for $J_i,i\geq 2.$

Recall the Schwarzian derivative of a function $f \in C^3$ is given by:
$$({\bf S} f)(x)=\frac{f'''(x)}{f'(x)}-\frac{3}{2}(\frac{f''(x)}{f'(x)})^2.$$
It is also well known that the Schwarzian derivative of the composition of two functions $h, f\in C^3$ satisfies
$${\bf S}(h\circ f)(x)=({\bf S}h)(f(x))\times (f'(x))^2+({\bf S}f)(x).$$
Consequently, Schwarzian derivative of the composition is negative if both functions have negative Schwarzian derivatives. Let $g$ denote the composition of the left branches of $T_{\alpha(\varphi^{R-1}\omega)},...,T_{\alpha(\varphi\omega)}$ and the right branch
of $T_{\alpha(\omega)}$. Notice that on $J_i(\omega), i\geq 2,$ 
we have $g(x)= T_\omega^R(x).$ Since $0<\alpha<\beta\leq 1$, we have for the left branch $T'_{\alpha(\omega)}> 0$, $T''_{\alpha(\omega)}>0$ and $T'''_{\alpha(\omega)}\le 0$; in particular, $T'''_{\alpha(\omega)}= 0$ if and only if $\alpha(\omega)=\beta=1$. Thus, ${\bf S}g< 0.$ 

\bigskip

For each $J_i(\omega),i\geq 2,$ let  $J=[x'_{i+1}(\omega), 2]$. Note that $g(x'_{i+1}(\omega))<\frac12$. Set $\kappa:=\frac12-\sup_{\omega}g(x'_{i+1}(\omega))>0$. Then 
 $J_i(\omega)\subset J$ and
$g(J_i(\omega))=(\frac{1}{2},1]\subset (\frac12-\kappa,2]\subset g(J).$ This means $g(J)$ contains a $\kappa$-scaled neighborhood of $g(J_i(\omega))$ with constant $\kappa.$ Therefore, by Koebe principle \cite{MV} there exists a constant $C(\kappa)>0$ such that
$$|\log\frac{g'(x)}{g'(y)}|\leq C(\kappa)\frac{|x-y|}{|J_i(\omega)|}\leq C(\kappa),\quad \forall x,y \in J_i(\omega),$$
and consequently, 
$$|\frac{g'(x)}{g'(y)}|\leq e^{C(\kappa)}.$$ 
It follows that
$$\frac{|x-y|}{|J_i(\omega)|}\leq e^{C(\kappa)}\cdot\frac{|g(x)-g(y)|}{|g(J_i(\omega))|}.$$
Hence, $|\log\frac{g'(x)}{g'(y)}|\leq C(\kappa)\cdot e^{C(\kappa)}\cdot\frac{|g(x)-g(y)|}{|g(J_i(\omega))|}$, which completes the proof.
\end{proof}
\begin{remark}\label{positive_schwarzian}
Note that for $\beta>1$, $({\bf S} T_{\beta}(x))>0 $ for $x\in \left(0,\frac12\left(\frac{\beta-1}{(1+\beta)(1+\frac{\beta}{2})}\right)^{1/\beta}\right)$.
\end{remark}
\begin{proposition}\label{distortion}
There exists a constant $C(F)>0$ such that for $z_1,z_2\in \Delta^j_{0,i},$
$$|\frac{DF^R(z_1)}{DF^R(z_2)}-1|\leq C(F)\cdot\theta ^{s(F^R(z_1),F^R(z_2))}.$$
\end{proposition}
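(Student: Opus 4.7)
The plan is to chain together Lemmas \ref{distcoeff} and \ref{distlog} after stripping the probability-dependent factor out of the Jacobian ratio. The essential observation is that on an atom $\Delta_{0,i}^j$, the symbolic itinerary $(\alpha(\omega),\alpha(\varphi\omega),\ldots,\alpha(\varphi^{i-1}\omega))$ is constant by construction, so both the branch composition realised by $T^R_\omega$ and the probability factor
\[ \prod_{k=0}^{i-1}\varphi'(\varphi^k\omega)=\prod_{k=0}^{i-1}p_{\alpha(\varphi^k\omega)}^{-1} \]
take a single value on the whole atom. Writing $z_j=(x_j,\omega_j)$ and using that vertical moves on the tower contribute Jacobian $1$, the Jacobian of $F^R$ at $z_j$ equals $DT^R(x_j)$ times this constant probability factor, where $T^R$ is the common branch composition. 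Hence the probability factor cancels in the ratio and
\[ \frac{DF^R(z_1)}{DF^R(z_2)}=\frac{DT^R(x_1)}{DT^R(x_2)}. \]

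Next I would apply Lemma \ref{distlog} to the right-hand side to obtain
\[ \left|\log\frac{DT^R(x_1)}{DT^R(x_2)}\right|\leq C\,|T^R(x_1)-T^R(x_2)|. \]
The quantity $|T^R(x_1)-T^R(x_2)|$ is exactly the distance between the $x$-coordinates of $F^R(z_1)$ and $F^R(z_2)$, two points of $\Delta_0$ whose separation time is $s:=s(F^R(z_1),F^R(z_2))$ by hypothesis. Invoking Lemma \ref{distcoeff} on this pair yields $|T^R(x_1)-T^R(x_2)|\leq\theta^s$, whence $\bigl|\log(DF^R(z_1)/DF^R(z_2))\bigr|\leq C\theta^s$.

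To finish, I would pass from the logarithm to the ratio minus one using the elementary inequality $|e^u-1|\leq|u|\,e^{|u|}$, combined with the uniform bound $|u|\leq C$ already supplied by Lemma \ref{distlog}. Setting $C(F):=Ce^C$ produces
\[ \left|\frac{DF^R(z_1)}{DF^R(z_2)}-1\right|\leq C(F)\,\theta^{s(F^R(z_1),F^R(z_2))}, \]
which is the desired estimate. I do not expect a serious obstacle here: all the genuine analysis (negative Schwarzian derivative and the Koebe principle, and with it the constraint $\beta\leq 1$) has been packaged into Lemma \ref{distlog}, while Lemma \ref{distcoeff} supplies exponential contraction on the base. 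The only point calling for care is the first step, since in general $\omega_1\neq\omega_2$; but the itinerary of every point in $\Delta_{0,i}^j$ is the same, which is what makes both $T^R$ and the probability factor independent of $\omega$ within the atom and hence allows the cancellation that reduces the problem to one purely about $T^R$ in the $x$-variable.
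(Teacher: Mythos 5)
Your argument is correct and follows essentially the same route as the paper's proof: cancel the constant probability factor $P^R_\omega$ across the atom to reduce the Jacobian ratio of $F^R$ to that of $T^R_\omega$ in the $x$-variable, then chain Lemma \ref{distlog} (Koebe distortion) with Lemma \ref{distcoeff} (exponential base contraction), and finally convert the log-bound into a bound on the ratio minus one via an elementary inequality (the paper uses $|x-1|\leq\frac{e^C-1}{C}|\log x|$; your $|e^u-1|\leq|u|e^{|u|}$ is equivalent up to constants). The only cosmetic difference is that the paper spells out the cancellation via the triangular Jacobian determinant of $S^R$, whereas you argue it directly from constancy of the itinerary on the atom.
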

\begin{proof}
Let $z_1=(x_1,\omega_1),z_2=(x_2,\omega_2)\in \Delta^j_{0,i}.$ Then they have same realization
$$(\alpha(\omega_l),\alpha(\varphi\omega_l),...,\alpha(\varphi^{R-1}\omega_l)),$$ for $j=1,2.$ Using this fact and
$F^R(z_l)=S^R(z_l),$ for $z_l \in \Delta_{0,i}, l=1,2$, we have:
\begin{equation*}
\begin{split}
\frac{DF^R(z_1)}{DF^R(z_2)}&=\frac{DS^R(x_1,\omega_1)}{DS^R(x_2,\omega_2)}\\
&=\frac{\left|\begin{array}{cc}
                DT^R_{\omega_1}(x_1) & \frac{\partial T^R_{\omega}}{\partial\omega}(z_1) \\
                0 & D\varphi^R(\omega_1) 
              \end{array}
\right|}{\left|\begin{array}{cc}
                DT^R_{\omega_2}(x_2) & \frac{\partial T^R_{\omega}}{\partial\omega}(z_2) \\
                0 & D\varphi^R(\omega_2)
              \end{array}
\right|}\\
&=\frac{DT^R_{\omega_1}(x_1)\cdot\frac{1}{P^R_{\omega_1}}}{DT^R_{\omega_2}(x_2)\cdot\frac{1}{P^R_{\omega_2}}}\\
&=\frac{DT^R_{\omega_1}(x_1)}{DT^R_{\omega_2}(x_2)}=\frac{DT^R_{\omega}(x_1)}{DT^R_{\omega}(x_2)},
\end{split}
\end{equation*}
for any $\omega\in \Delta^j_{0,i}.$ By using Lemma \ref{distcoeff}, Lemma \ref{distlog} and the following inequality:
$$|x-1|\leq\frac{e^C-1}{C}|\log x|, \text{ if } |\log x|\leq C,$$
we obtain
\begin{equation*}
\begin{split}
|\frac{DT^R_\omega(x_1)}{DT^R_\omega(x_2)}-1|
&\leq\frac{e^C-1}{C}|\log \frac{DT^R_\omega(x_1)}{DT^R_\omega(x_2)}|\\
&\leq\frac{e^C-1}{C}\times C|T^R_\omega(x)-T^R_\omega(y)|\\
&\leq C(F)\cdot\theta ^{s(F^R(z_1),F^R(z_2))}.
\end{split}
\end{equation*}
\end{proof}
\bibliographystyle{amsplain}

\end{document}